   \newcommand{\ft}{\mathfrak{t}}
\newcommand{\cF}{\mathcal{F}}
\newcommand{\K}{\mathbb{K}}
\newcommand{\fg}{\mathfrak{g}}
\newcommand{\R}{\mathbb{R}}
 \newcommand{\cC}{\mathbb{C}}
\newcommand{\oO}{\mathbb{O}}
\newcommand{\hH}{\mathbb{H}}
\renewcommand\labelenumi{(\roman{enumi})}
\renewcommand\theenumi\labelenumi
\newtheorem{thm}{Theorem}
\newtheorem{prop}{Proposition}
\theoremstyle{definition}
\newtheorem{dfn}{Definition}
\theoremstyle{remark}
\newtheorem{rem}{Remark}
\newtheorem{ex}{Example}
\theoremstyle{remark}
\title{Wishart cones and quantum geometry}
\author{Noemie C. Combe}
\address{n.combe@uw.edu.pl, Ulica Banacha 2, University of Warsaw}
\date{December, 2024}
\begin{document}

\begin{abstract}
  An important object appearing in the framework of the Tomita--Takesaki theory is an invariant cone under the modular automorphism group of von Neumann algebras. As a result of the connection between von Neumann algebras and quantum field theory, von Neumann algebras have become increasingly important for (higher) category theory and topology.

 We show explicitly how an example of a class of cones discovered by Connes--Araki--Haagerup (CAH), invariant under the modular automorphism group, are related to Wishart laws and information geometry. Given its relation to 2D quantum field theory this highlights new relations between (quantum) information geometry and quantum geometry.    
\end{abstract}
\maketitle
\section{Introduction}
 An important object appearing in the framework of the Tomita-Takesaki theory~\cite{Ta70} is an invariant cone under the modular automorphism group of von Neumann algebras. As a result of the connection between von Neumann algebras and quantum field theory, von Neumann algebras have become increasingly important for (higher) category theory and topology.

 We show explicitly that an example of a class of cones discovered by Connes--Araki--Haagerup (CAH)~\cite{Connes}, which are invariant under the modular automorphism group, are related to Wishart laws and information geometry. This highlights new relations between (quantum) information geometry and quantum geometry reinforcing results in \cite{CoMa,CMM,CCN1,CCN2} on information geometry (flat manifolds of exponential type) and 2D quantum field theory. 

\, 

 The CAH invariant cones are self-dual and when those cones are finite dimensional, they are transitively homogeneous. In the finite dimensional case, the CAH cones are in bijection with the class of formally real Jordan algebras. There are five irreducible such algebras. Three of them correspond to the set of self adjoints $n\times n$ matrices with elements in the field  $\R, \cC$ and $\hH$. One is the exceptional algebra of  self adjoints $3\times 3$ matrices with coefficients in the field of octonions. The last class is of a different nature as it corresponds to the spin factor algebra. 
 
In the finite dimensional case,  we have proved in \cite{C23,C24} that those cones satisfy the axioms of a pre-Frobenius domain and that they contain a subspace being a Frobenius manifold, see \cite[p.19]{Man99} for a definition on Frobenius manifolds and pre-Frobenius structures.  

\, 

In classical literature, recall that by definition the Wishart law is characterized by its probability density function, which is as follows. Let $\mathbf {X} $ be an $m \times m$ symmetric matrix of random variables, positive semi-definite. Let $\mathbf {B} $ be a (fixed) symmetric positive definite matrix of size $m \times m$.

Then, if $n \geq m$, $\mathbf{X}$ has a Wishart distribution with $n$ degrees of freedom if it has the following probability density function

\[{f_{\mathbf{X} }(\mathbf{X} )={\frac {1}{2^{nm/2}\left|{\mathbf {B} }\right|^{n/2}\Gamma _{m}\left({\frac {n}{2}}\right)}}{\left|\mathbf {X} \right|}^{(n-m-1)/2}e^{-{\frac {1}{2}}\operatorname {tr} ({\mathbf {B} }^{-1}\mathbf {X} )}}\]
where ${\left|{\mathbf {X} }\right|}$ is the determinant of ${\mathbf{X}}$ and $ \Gamma_m$ is the multivariate gamma function defined as

\[{ \Gamma _{m}\left({\frac {n}{2}}\right)=\pi^{m(m-1)/4}\prod _{j=1}^{m}\Gamma \left({\frac {n}{2}}-{\frac {j-1}{2}}\right).}\]

{{\bf Acknowledgements}
I would like to thank G\'erard Letac for discussions on Wishart laws and thank Jacques Faraut for discussions on symmetric convex cones. This research is part of the project No. 2022/47/P/ST1/01177 co-founded by the National Science Centre  and the European Union's Horizon 2020 research and innovation program, under the Marie Sklodowska Curie grant agreement No. 945339 \includegraphics[width=1cm, height=0.5cm]{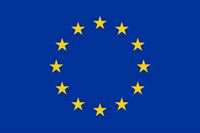}.}

\section{The geometry of symmetric cones}

\subsection{Strictly convex cones} 
In the following parts of this article we always consider {\it strictly convex cones}. Note that for brevity we simply refer to them as {\it convex cones.} 

Let us recall some elementary notions on strictly convex cones (see \cite{FK} for further information). 

\begin{dfn}
Let $V$ be a finite dimensional real vector space. Let $\langle-,-\rangle$ be a non-singular symmetric bilinear form on $V$.
A subset $\Omega \subset V$ is a convex cone if and only if $x,y \in \Omega$ and $\lambda,\mu >0$ imply $\lambda x+\mu y \in \Omega$.
\end{dfn} 

\subsection{Homogeneous cones} 
The automorphism group $G(\Omega)$ of an open convex cone $\Omega$ is defined by 
\[G(\Omega)=\{g\in GL(V)\, |\, g\Omega=\Omega\}\]
An element $g\in GL(V)$ belongs to $G(\Omega)$ iff $g\overline{\Omega}=\overline{\Omega}$ \cite{FK}
So, $G(\Omega)$ is a closed subgroup of $GL(V)$ and forms a Lie group. 
The cone $\Omega$ is said to be {\it homogeneous} if $G(\Omega)$ acts transitively upon $\Omega$.

\subsection{Symmetric cones} 
From homogeneous cones one can construct symmetric convex cones. Let us introduce the definition of an open dual cone. An open dual cone $\Omega^*$ of an open convex cone is defined by $\Omega^*=\{y\in V\, |\, \langle x,y \rangle>0,\, \forall\, x\in \overline{\Omega}\setminus 0 \}$. A homogeneous convex cone $\Omega$ is symmetric if $\Omega$ is self-dual i.e. $\Omega^*=\Omega$. Note that if $\Omega$ is homogeneous then so is $\Omega^*$.

\subsection{Automorphism group}  Let us go back to the automorphism group of $\Omega$. This discussion relies on Prop I.1.8 and Prop. I.1.9 in \cite{FK}.

\smallskip 

Let $\Omega$ be a symmetric cone in $V$.  For any point $a\in \Omega$ the stabilizer of $a$ in $G(\Omega)$ is given by 
\[G_a=\{g\in G(\Omega)\, |\, ga=a\}.\]

By [Prop I.1.8 \cite{FK} ], if $\Omega$ is a proper open homogeneous convex cone then for any $a$ in $\Omega$, $G_a$ is compact. Now, if $H$ is a compact subgroup of $G$ then $H\subset G_a$ for some $a$ in $\Omega$. This means that the groups $G_a$ are all maximal compact subgroups of $G$ and that if $\Omega$ is homogeneous then all these subgroups are isomorphic. 

By \cite[Prop. I.1.9]{FK}, if $\Omega$ is a symmetric cone, there exist points $e$ in $\Omega$ such that $G(\Omega)\cap O(V)\subset G_e$, where $O(V)$ is the orthogonal group of $V$. For every such $e$ one has $G_e=G\cap O(V)$ 

Suppose $\Omega$ is a convex homogeneous domain in $V$. Assume that
\begin{itemize}
   \item   $G(\Omega)$ is the group of all automorphisms;
   \item   $G_e=K(\Omega)$ is the stability subgroup for some point $x_0\in \Omega$;
   \item   $T(\Omega)$ is a maximal connected triangular subgroup of $G(\Omega).$ 
\end{itemize}

This decomposition on the Lie group side leads naturally to its Lie algebra. Cartan's decomposition for the Lie algebra tells us that $\fg=\mathfrak{k} \oplus\ft,$ 

where:

\begin{itemize}
   \item   $\mathfrak{t}$ can be identified with the tangent space of $\Omega$ at $e$. 

   \item  $\mathfrak{k}$ is the Lie algebra associated to $K(\Omega)$
\end{itemize}
 and
\[[\ft,\ft]\subset \mathfrak{k},\]
\[[\mathfrak{k},\ft]\subset \ft.\]

\subsection{Classification of cones}
Any symmetric cone (i.e. homogeneous and self-dual) $\Omega$ is in a unique way isomorphic to the direct product of irreducible symmetric cones $\Omega_i$ (cf. \cite[Prop. III.4.5]{FK}). According to Vinberg we have:

\begin{prop}~\label{P:Vclass}
Each irreducible homogeneous self--dual cone belongs to one
of the following classes:
\vspace{3pt}\begin{table}[ht]
    \centering
    \begin{tabular}{|c|c|c|}
  \hline
Nb & Symbol & Irreducible symmetric cones \\
 \hline
 1. &      $ \mathscr{P}_n(\R)$ &  Cone of $n \times n$ positive definite symmetric real matrices. \\
     &  & \\
      
    2.  &      $ \mathscr{P}_n(\cC)$ &  Cone of $n \times n$ positive definite self-adjoint complex matrices. \\
  & &\\
       3.  &    $ \mathscr{P}_n(\hH)$ &  Cone of $n \times n$ positive definite self-adjoint quaternionic matrices. \\
           &   & \\
        4. &   $ \mathscr{P}_3(\oO)$ & Cone of $3 \times 3$  positive definite self-adjoint octavic matrices. \\
           &   & \\
    5. &    $\Lambda_n$    & Lorentz cone  given by $x_0>\sqrt{\sum_{i=1}^n x_i^2}$ (aka spherical cone). \\
        &   & \\
       \hline
    \end{tabular}
    \caption{Classification of irreducible symmetric cones}
    \label{tab:cones}
\end{table}
\end{prop}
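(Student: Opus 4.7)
The plan is to reduce the classification of irreducible symmetric cones to the Jordan--von Neumann--Wigner classification of simple Euclidean (formally real) Jordan algebras via the Koecher--Vinberg correspondence. More precisely, I would proceed in four steps.

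First, I would invoke the Koecher--Vinberg theorem (see \cite{FK}, Chapter III): there is a bijection between symmetric cones $\Omega \subset V$ and Euclidean Jordan algebra structures on $V$, under which $\Omega$ is realized as the interior of the cone of squares $\{x^2 : x \in V\}$, and irreducibility of $\Omega$ corresponds to simplicity of the Jordan algebra. Using the decomposition $\fg=\mathfrak{k}\oplus\ft$ recalled above, one identifies $\ft \cong T_e\Omega$ with $V$, and the Jordan product is constructed from the infinitesimal action of $T(\Omega)$ on $V$ together with the base point $e$, which becomes the unit. Simplicity on the Jordan side matches with the impossibility of writing $\Omega$ as a nontrivial product of symmetric cones.

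Second, having reduced the problem to classifying simple Euclidean Jordan algebras, I would use the Peirce decomposition relative to a Jordan frame $\{c_1,\dots,c_r\}$ of primitive idempotents: $V = \bigoplus_{i\le j} V_{ij}$ with $\dim V_{ii}=1$ and $\dim V_{ij}=d$ for $i<j$, where $d$ is a single invariant of the algebra (by simplicity). A standard coordinate analysis on the off-diagonal Peirce spaces equips $V_{12}$ with the structure of a composition algebra over $\R$, which by Hurwitz's theorem forces $d\in\{1,2,4,8\}$, with the octonionic case $d=8$ admissible only when the rank $r\le 3$ (otherwise associativity in the multiplication table fails on a triple of distinct Peirce blocks). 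This is the main obstacle and the technical heart of the proof.

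Third, I would match each admissible pair $(r,d)$ with one of the listed cones. For $d=1,2,4$ and arbitrary rank $r=n$, the Jordan algebra is isomorphic to the self--adjoint matrices $\mathrm{Herm}(n,\K)$ with $\K=\R,\cC,\hH$, and the cone of squares is $\mathscr{P}_n(\K)$; for $d=8$, $r=3$ one obtains $\mathrm{Herm}(3,\oO)$ and the cone $\mathscr{P}_3(\oO)$. The remaining case is rank $r=2$: here every simple Jordan algebra is a spin factor $\R\oplus\R^n$ with product $(t,u)\cdot(s,v)=(ts+\langle u,v\rangle,\, tv+su)$, and a direct computation of $(t,u)^2=(t^2+|u|^2,\, 2tu)$ shows that the interior of its set of squares is precisely the Lorentz cone $\Lambda_n$.

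Finally, uniqueness of the decomposition of an arbitrary symmetric cone into irreducible factors (already quoted from \cite[Prop.\ III.4.5]{FK}) ensures the list is exhaustive and non-redundant after disallowing the low-rank coincidences (e.g.\ $\Lambda_2\cong\mathscr{P}_2(\R)$, etc.), which one verifies by comparing dimensions $r+\binom{r}{2}d$ with $n+1$ for the spin factor.
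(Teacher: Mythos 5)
The paper does not actually prove this proposition: it is quoted as Vinberg's classification and backed by the reference \cite{FK} (and \cite{V}), with only the neighbouring fact about unique decomposition into irreducibles cited as \cite[Prop.~III.4.5]{FK}. Your proposal therefore supplies an argument where the paper supplies a citation, and the argument you sketch is the standard one from \cite{FK} (Chapters~III--V) and \cite{Ja}: Koecher--Vinberg to pass from irreducible symmetric cones to simple Euclidean Jordan algebras, Peirce decomposition relative to a Jordan frame, the coordinatization step producing a composition algebra on the off-diagonal Peirce spaces, Hurwitz's theorem, and finally the identification of each admissible pair $(r,d)$ with one of the five entries of Table~\ref{tab:cones}. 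This is consistent with the way the paper itself ties the cones to formally real Jordan algebras in Section~\ref{S:JordanList}, so your route is the natural one; it buys an actual proof at the cost of importing the full Jordan-algebraic machinery, whereas the paper only needs the statement and so reasonably outsources it.

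One point needs care in your step two: the Hurwitz constraint $d\in\{1,2,4,8\}$ (and the exclusion of $d=8$ for $r\ge 4$) is obtained only for rank $r\ge 3$, where one has three pairwise distinct Peirce blocks to play off against each other. For $r=2$ the dimension $d=\dim V_{12}$ is unconstrained --- which is exactly why the spin factors form an infinite family $\Lambda_n$ rather than being absorbed into the four matrix families. As literally written, your step two would wrongly discard $\Lambda_n$ for $d\notin\{1,2,4,8\}$, contradicting your (correct) step three; the fix is simply to split into the cases $r=1$ (giving $\R_{>0}$), $r=2$ (spin factors, arbitrary $d$), and $r\ge 3$ (composition-algebra coordinates, hence $Herm(n,\K)$ with $\K=\R,\cC,\hH$, or $Herm(3,\oO)$). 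With that adjustment, and your closing remark on the low-rank coincidences $\Lambda_{d+1}\cong \mathscr{P}_2(\K)$, the sketch is a correct outline of the classical proof.
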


\begin{rem}
We have two remarks. The first is that $\mathscr{P}_3(\oO)$ corresponds to the Cayley algebra. 
The second is that  spherical cone $\Lambda_n$ corresponds here to an $n$-dimensional Anti-de-Sitter (AdS) space.
\end{rem}

 \subsection{Jordan algebra structures}\label{S:JordanList}
Recall the tight relations between those cones and algebraic objects, namely formally real simple Jordan algebras. 
We introduce some notations: 
\begin{itemize}
   \item[---]   $Sym(n,\mathbb{K})$ denotes the space of symmetric matrices of dimension $n\times n$ defined over the field $\mathbb{K}$.

   \item[---]   $Herm(n,\mathbb{K})$ denotes the space of hermitian matrices of dimension $n\times n$ defined over the field $\mathbb{K}$.
\end{itemize}
\begin{dfn}
 An algebra $(\mathscr{A}^+,\circ)$ is a Jordan algebra if:
 \begin{itemize} 
   \item[---]   it is commutative and
   \item[---]   $(x^2\circ y)\circ x = x^2\circ (y\circ x)$, 
 \end{itemize} where we have $x^2=x\circ x$ and $x,y\in \mathscr{A}^+.$ 
 
 A Jordan algebra is called Euclidean (or formally real) if it satisfies the formal reality axiom: \[x_1^2+\cdots+x_n^2 =0\quad \text{implies}\quad x_1=\cdots=x_n=0\] 
 \end{dfn}
 
In particular, a formally real Jordan algebra is semi-simple. To any strictly convex symmetric cone ({\bf SCS}-cone) there exists a bijectively corresponding semisimple formally real Jordan algebra. 

\smallskip 

\begin{ex}
  The tangent space to  $\mathscr{P}_n(\R)$, is $Sym(n,\R)$. Given $X,Y\in Sym(n,\R)$, one gets a new product (giving a Jordan algebra): \[X\circ Y=\frac{1}{2}(XY+YX),\] where $XY$ is the standard matrix product.  
\end{ex}
\section{Measures on homogeneous spaces}
\subsection{Invariant measures on homogeneous spaces}
We are interested in recalling some properties of invariant measures on homogeneous spaces which will naturally apply to the convex cones discussed above. 
Let $\Omega$ be the non-empty cone and $G$ a group. Then $\Omega$ is called a $G$-space if it is equipped with an action of the group $G$ on $\Omega$. Naturally, $G$ acts by automorphisms on the set $\Omega$. So, supposing that $\Omega$ belongs to some category then the elements of the group $G$ are assumed to act as automorphisms in that same category. A homogeneous space is a $G$-space on which $G$ acts transitively.

\, 

Given a coset space $G/H$, this forms a homogeneous space for $G$ with a distinguished point  (the coset of the identity). If the action of $G$ on $\Omega$ is continuous and $\Omega$ is Hausdorff then $H$ is a closed subgroup of $G$. If $G$ is a Lie group then $H$ is a Lie subgroup by Cartan's theorem and $G/H$ is a smooth manifold carrying a unique smooth structure, compatible with the group action. 

\, 

Suppose that $G$ is a locally compact topological group acting on $\Omega$. Then, it acts (on the left) on continuous functions on $\Omega$ by the following formula:
\[(\lambda(g)f)(x)=f(g^{-1}\cdot x),\] where $x\in V$ and $g\in G$;
and on the right by 
\[(\rho(g)f)(x)=f(x\cdot g)\]

\, 

The basic theorem about Haar measures goes as follows. Suppose $G$ is a locally compact topological group. Then, there is a nonzero Borel measure $d^l_G$ on  $G$ with the left invariance property that:

\[\int_G f(g^{-1}x)d^l_G(x)=\int_G f(x)d^l_G(x),\]
for any $g\in G$ and $f$ a continuous function on $G$. Any right translate of $d^l_G$ shares this left-invariance and therefore is a positive scalar multiple of $d^l_G$:
\[\int_G f(xg)d^l_G(x)=\delta_G(g^{-1})\int_G f(x)d^l_G(x),\quad (h\in G, f\in C(G)),\]
where $\delta_G:G\to \mathbb{R}^+$ is a continuous homomorphism being the {\it modular character} of $G$.

\begin{rem}
Whenever $G$ is a Lie group, we have: $\delta_G(g)=|\det (Ad(g))|$. 
\end{rem}
Assume $G$ is a locally compact group and that $H$ is a closed subgroup. Suppose that $d^l_G$ and $d^l_H$ are left Haar measures.
There is a non-zero $G$-invariant Borel measure on $G/H$ if and only if the modular functions satisfy $\delta_H=\delta_G|_{H}$.
If the modular functions agree, then the invariant measure on $G/H$ is unique up to  a positive multiple. It may be normalized so that the following equality is true: 
\[\int_Gf(x)d^l_G(x)=\int_{G/H}\left[\int_{H}f(xh)d^l_H(h) \right] d_{G/H}(xH),\] for $f$ a continuous function on $G$.

\subsection{Multipliers}
Let us go back to the setting given by the SCS cones. Assume $\Omega$ is a strictly convex symmetric cone of finite dimension with enveloping vector space $V$. 
 
 \,
 
As previously, let $G$ be the Lie group acting on $\Omega$. It is given by 
 \[G\times \Omega\to \Omega\]
 \[(g,p)\mapsto g(p):=gp.\]
 This mapping is differentiable and induces an action of $G$ on the set of measures on $\Omega$. It is the connected component of the identity in the automorphism group $Aut(\Omega)$. The action is transitive. 
 \,

 Let $\xi: G\to \mathbb{R}_+$ be the multiplier on the group $G$.
 The group $G$ acts on $\Omega$ transitively and properly. For each multiplier $\xi$ there exists one and only one relatively invariant measure $\mu^\xi$ on the open subset $\Omega\subset V$ under the action of $G$. In other words, for any $g\in G$, we have: $$g^{-1}\cdot \mu^{\xi}=\xi(g)\, \mu^{\xi}.$$

\,

The following integral converges:
$\Psi^\xi(\sigma)=\int_{\Omega}\exp{\{-\sigma(s)\}}d\mu^{\xi}(s)$ where $\sigma\in \Omega^*$ and has the following property $$\Psi^{\xi}(^tg^{-1}\, \sigma)=\xi(g)\, \chi^{\xi}(\sigma),$$ where $g\in G, \sigma\in \Omega^*$. 

For any multiplier $\xi$ there exists a  positive function $n^{\xi}:\Omega\to \R_{+}$ satisfying the property 
\[n^\xi(gs)=\xi(g)n^\xi(s),\]
$g\in G,\, s\in \Omega$. Such a function is unique up to multiplication by a positive constant since the action of $G$ on $\Omega$ is transitive.  

\subsection{Generalized Wishart laws} The previous discussion allows us now to consider generalized Wishart laws.
Let $\Omega$ be a SCS cone with enveloping vector space $V$; $\xi$ is a multiplier on the Lie group and $\mu^\xi$ the relatively invariant measure on the cone $\Omega$ with multiplier $\xi$ with respect to the action of $G$ on $\Omega$.
The generalized Wishart probability distribution $W_{\sigma,\xi}$ defined on the cone $\Omega$ is given by:
\[dW_{\sigma,\xi}=\frac{1}{n^\xi(\sigma)}\exp\{-\sigma^{-\xi}(s)\}d\mu^{\xi}(s),\]

where $$n^\xi(\sigma)=\int_{\Omega}\exp\{-\sigma^{-\xi}(s)\}d\mu^\xi(s),$$ $\sigma\in \Omega^*$, $s\in\Omega$, $\xi$ is the multiplier. Note that the Wishart distribution only depends on $ \mu^{\xi}$ through $\xi$. In particular any other relatively invariant measure with given multiplier $\xi$ yields the same Wishart distribution. If $W_{\sigma_1,\xi_1}= W_{\sigma_2,\xi_2}$ then $(\sigma_1,\xi_1)=(\sigma_2,\xi_2)$. 

\, 

The measure $d\mu(s):=n^\xi(s)^{-1}d\mu^\xi(s)$ is invariant under the action of $G$ on $\Omega$ and independent on the choice of the relatively invariant measure 
$\mu^{\chi}$. 

\subsection{Wishart laws on cones}
An example of such cones is the space of positive definite symmetric matrices over a real division algebra $\K$ (see Table\ref{tab:cones}). The real division algebra includes real numbers $\R$, complex numbers $\cC$, quaternions $\hH$ and octonions $\oO$. Note that for the octonion case, one considers the space of $3\times 3$ matrices. 


For simplicity, let us consider the cone $\mathscr{P}_n(\R)$ for $\K=\R$. In a multivariate sample, the classical Wishart distribution arises as the distribution of the maximum likelihood (ML) estimator of the covariance matrix. Assume $I$ is a finite set. We shall denote $|I|=n$ the cardinality of the set $I$. Let $S$ be a positive definite symmetric matrix of random variables, being a point of the SCS cone of size $n\times n$. We take a fixed matrix $\Sigma$ symmetric positive definite matrix of size $n\times n$, being the expectation. If we take $m\geq n$, $S$ has a Wishart distribution with $m$ degrees of freedom.  \begin{rem}
     Note that this last condition insures that the maximum likelihood of $\Sigma$ is unbiased.  
 \end{rem}

\, 

The classical Wishart distribution on this cone is given by  
\[dW_{\Sigma,\lambda}(S)=\frac{\lambda^{n\lambda}\det(S)^{\lambda-\frac{n+1}{2}}}{\pi^{\frac{n(n-1)}{4}} \prod\limits_{i=1}^{n}(\Gamma(\lambda-\frac{i-1}{2}))\det(\Sigma)^\lambda }\exp\{-\lambda tr(\Sigma^{-1}S)\}dS,\]
such that $\lambda$ is the shape parameter with $\lambda:=\frac{1}{2}m\in \{\frac{n}{2},\frac{n+1}{2},\frac{n+2}{2},\cdots\}$ ($m$ is the degree of freedom) and $\Sigma=m{\bf B}$ is the expectation, being an element of the cone of positive definite symmetric matrices (${\bf B}$ is the multivariate scale);
 $dS$ denotes the standard Lebesgue measure on the cone; $\det$ is the determinant, $tr$ is the trace. Remark that one has a probability measure for $\lambda>\frac{n-1}{2}$.

\, 

The family of Wishart distributions on the sample space given by the cone $\mathscr{P}_n(\K)$ (or $\Lambda_n$) with a fixed shape parameter $\lambda$ constitutes a statistical model: the classical Wishart model. 

\section{Monoidal structures}\label{S:Wish}
We highlight that we have a very convenient structure occurring: a monoidal structure. This is important in the sense that any SCS cone can be a linear combination of irreducible SCS cones. This comes from a more global structure arising from categories: symmetric monoidal categories and impacts the Wishart laws too. 

\subsection{Monoidal  (= tensor) categories}   {\it Data:} 
 multiplication $\otimes$ of objects, with an identity object ${\bf 1}$ and natural isomorphisms
 $$
\alpha_{A,B;C}: (A\otimes B)\otimes C  \to A\otimes (B\otimes C), \quad
\tilde{\rho}_A: A\otimes {\bf 1} \to A, \quad
\tilde{\lambda}_A: {\bf 1}\otimes A \to A .
$$

\subsection{Symmetric monoidal categories.} Additional {\it twist} isomorphisms
$\tau_{A,B}: A\otimes B \to B\otimes A$, with $\tau_{A,B}\tau_{B,A} = {id}_{A\otimes B}$,
plus many commutative diagrams.

\subsection{Symmetric monoidal category CAP}
We introduce the category $CAP$~\cite{Ch65}. 
\begin{dfn}
 The category $CAP$ of probability distributions consists of the following data:
 \vspace{3pt}
 
 \begin{enumerate}
\item An object of $CAP$ is the set $Cap (X , \cF )$ of all probability distributions 
on a $\sigma$--algebra $(X , \cF)$.
 \vspace{3pt}
 
\item One (Markov) morphism $\Pi \in Hom_{CAP} (Cap (X_1 , \cF_1 ), Cap (X_2 , \cF_2 ))$.
It is given by a ``transition measure'', i.e. a function $\Pi \{* | x^{\prime}\}$ upon $\cF_2\times X_1$
such that for a fixed $U\in \cF_2$, $\Pi \{U | x_1\}$ is $\cF_1$--measurable function
on $X_1$, and for a fixed $x_1 \in X_1$,  
$\Pi \{U | x_1\}$ is a probability distribution upon $\cF_2$.
 \vspace{3pt}
 
Explicitly, such $\Pi$ sends the probability distribution $P_1\in Cap(X_1, \cF_1)$
to the probability distribution $P_2\in Cap(X_2, \cF_2)$ given by
 \[
P_2(X_2 | x_1) := \int_{X_1} \Pi\{ * | x_1\} P_1\{dx_1\}.
\]
\end{enumerate}
\end{dfn}
A direct product of measurable spaces and the corresponding tensor product of all the collections of probability measures  can be defined. This multiplication is functorial with respect to the Markov category. This forms a symmetric monoidal category.
We apply this to the case of Wishart laws.
\subsection{Monoidal Wishart laws}
The classical Wishart distribution are concentrated on open cones of symmetric positive defined matrices of size $n$. Those cone do not need to be irreducible and this is where the monoidal structure comes in. 

Recall from Table~\ref{tab:cones} that there are five irreducible cones defined respectively over a real division algebra i.e. $\R$, $\cC$, $\hH,$ and $\oO$ (with an exceptional cone of $3\times 3$ positive definite hermitian matrices with entries in octonions $\oO$) and an extra Anti-de-Sitter cone of Lorentz type, endowed with a split-complex geometry. 

\, 

A more general setting for Wishart laws is available, given that one can work in a symmetric monoidal category of topological spaces (for the sample space and parameter space). In this very general setting, any SCS cone is isomorphic to a unique product of irreducible SCS cones (see Prop.~\ref{P:Vclass}), each of which is one of the five types described in Table \ref{tab:cones}.

Therefore, one can state the following:
\begin{thm}
Let $W_{\sigma,\xi}$ be a Wishart distribution parametrized by a SCS cone $\Omega$. Let $\xi$ be its multiplier and $\sigma$ its  parameter. Let $I$ be a finite set.
Then, the Wishart distribution $W_{\sigma,\xi}$ can be decomposed as
$$W_{\sigma,\xi}= \bigotimes\limits_{i\in I} W_{\sigma_i,\xi_i},$$  if and only if 
$\Omega$ decomposes into a linear combination of irreducible SCS cones (cf. Table \ref{tab:cones}) $\Omega= \bigotimes\limits_{i\in I} \Omega_i$ such that:
\begin{itemize}
    \item every Wishart distribution $W_{\sigma_i,\xi}$ is parametrized by $\Omega_i$, where $i\in I$;

\item the multiplier on $G(\Omega)$ is $\xi:=\prod\limits_{i\in I} \xi_i$; 

\item  the parameter is given by $\sigma:= (\sigma_i \, |\, i\in I)$.
\end{itemize}
 \end{thm}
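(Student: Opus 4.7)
The plan is to prove both directions using the multiplicative structure inherited from Vinberg's decomposition (Proposition~\ref{P:Vclass}) and from the symmetric monoidal category $CAP$ introduced in Section~\ref{S:Wish}.

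For the implication ($\Leftarrow$), I would start from the decomposition $\Omega = \bigotimes_{i \in I} \Omega_i$ into irreducible symmetric cones. Since the irreducible components are preserved by automorphisms of $\Omega$, one has $G(\Omega) = \prod_{i \in I} G(\Omega_i)$ as Lie groups. Any multiplier $\xi \colon G(\Omega) \to \R_+$ is a continuous homomorphism, so its restriction to each factor yields a multiplier $\xi_i$ on $G(\Omega_i)$ and $\xi = \prod_i \xi_i$ by multiplicativity. The relatively invariant measure $\mu^\xi$ on $\Omega$ with multiplier $\xi$ is, up to a positive constant fixed by the normalization, the product measure $\bigotimes_i \mu^{\xi_i}$; likewise the positive function $n^\xi$ appearing in the Wishart density satisfies $n^\xi(s) = \prod_i n^{\xi_i}(s_i)$, by the uniqueness-up-to-scalar statement recalled in the subsection on multipliers. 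Decomposing $\sigma = (\sigma_i)_{i \in I}$ through the dual cone $\Omega^* = \bigotimes_i \Omega_i^*$ and using the additivity of the exponent across factors, the density $dW_{\sigma,\xi}$ splits as a product of densities $dW_{\sigma_i, \xi_i}$. Functoriality of the direct product of measurable spaces with respect to the monoidal structure of $CAP$ (the tensor product of probability measures) identifies this factorization of densities with the claimed tensor decomposition $\bigotimes_i W_{\sigma_i, \xi_i}$.

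For the converse ($\Rightarrow$), I would invoke Proposition~\ref{P:Vclass}, which gives a unique decomposition of any SCS cone into irreducible symmetric cones. Suppose $W_{\sigma,\xi} = \bigotimes_{i\in I} W_{\sigma_i, \xi_i}$, with each factor a Wishart law parametrized by some SCS cone $\Omega_i'$. The support of a tensor product of Wishart distributions in $CAP$ is the product of the supports, so $\Omega$ is isomorphic (as a sample space) to $\prod_i \Omega_i'$; refining each $\Omega_i'$ by Proposition~\ref{P:Vclass} and reindexing $I$, we may assume each $\Omega_i'$ is irreducible. The uniqueness clause stated at the end of the subsection on generalized Wishart laws, namely that $W_{\sigma_1,\xi_1} = W_{\sigma_2,\xi_2}$ implies $(\sigma_1,\xi_1) = (\sigma_2,\xi_2)$, combined with the multiplicative behaviour of multipliers established in the forward direction, forces $\xi = \prod_i \xi_i$ and $\sigma = (\sigma_i)_{i \in I}$.

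The main obstacle I expect is the clean identification of the multiplier $\xi$ and the relatively invariant measure $\mu^\xi$ on $\Omega$ with genuine tensor products coming from the factors. This requires two ingredients: first, that every continuous homomorphism $G(\Omega) \to \R_+$ factors through the product decomposition of $G(\Omega)$, which is a character-theoretic fact about products of Lie groups; second, that the normalization constants hidden in the "up to positive scalar" uniqueness of $\mu^\xi$ and $n^\xi$ can be chosen compatibly so that the normalized Wishart densities agree on the nose, not merely up to a global constant. A secondary subtlety, which I would handle at the start, is that the cone $\Omega$ is intrinsically recoverable from the Wishart law as the interior of its support, so that factorization of the distribution transparently transfers to factorization of the cone.
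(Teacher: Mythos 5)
Your proposal is correct and follows essentially the same route as the paper: the paper's own proof is a single sentence asserting that the theorem ``comes from the symmetric monoidal structure of CAP and from the construction of SCS cones,'' which are precisely the two ingredients (the tensor product in $CAP$ and the unique decomposition of a SCS cone into irreducibles from Proposition~\ref{P:Vclass}) that you use. The difference is only one of detail: you actually carry out the factorization of the group, the multiplier, the relatively invariant measure and the normalizing constant, all of which the paper leaves implicit.
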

\begin{proof}
This comes from the symmetric monoidal structure of CAP and from the construction of SCS cones. 
\end{proof}

The theory of symmetric cones is closely related to the theory of formally real Jordan algebras.

\,

\,

\begin{rem}
   From Wishart law's perspective, the space of real symmetric matrices with prescribed zeros 
(and its subset consisting of positive definite matrices) are important in view of the study of the covariance matrix of a random vector with a given conditional independence.

The difficulty to handle this kind of matrices depends very much on the pattern of zeros, which is expressed by an undirected graph. See works of \cite{LetacMas1,LetacMas2} that give a simple condition of a graph for which the corresponding set of positive definite matrices forms a homogeneous cone. 
 
\end{rem}

We prove the following proposition.
\begin{prop}
~

\begin{enumerate}

\item Assume that there exists an exponential family in $\R^n$ having a variance function being quadratic and homogeneous. Then, there exists a convex symmetric cone for which it forms a family of Wishart laws.
\item Reciprocally, assume that we have a SCS cone defined over $\cC,\R, \oO, \hH$ or the split complex numbers. Then one associates to it its corresponding Jordan algebra and families of Wishart laws, defined for those symmetric cones. These are exponential families invariant under the Lie group preserving the convex symmetric cone. 
 \end{enumerate}
\end{prop}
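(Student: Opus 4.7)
The plan is to handle the two directions separately. Part (2) is essentially a direct assembly of the framework built in Sections 2 and 3, while part (1) invokes a structural classification and will require more care.

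For part (2), I would begin with a SCS cone $\Omega\subset V$ of one of the five types in Table~\ref{tab:cones}, together with its associated formally real Jordan algebra from Section~\ref{S:JordanList}. The group $G(\Omega)$ acts transitively on $\Omega$, and Section 3 furnishes, for each multiplier $\xi\colon G(\Omega)\to\R_+$, a relatively invariant measure $\mu^\xi$ and the Wishart distribution
\[
dW_{\sigma,\xi}(s)=\frac{1}{n^\xi(\sigma)}\exp\{-\sigma^{-\xi}(s)\}\,d\mu^\xi(s),\qquad \sigma\in\Omega^*.
\]
Writing $n^\xi(\sigma)^{-1}=\exp\{-\log n^\xi(\sigma)\}$ exhibits this family in standard exponential form: natural parameter $\sigma\in\Omega^*$, sufficient statistic the linear functional $s\mapsto\sigma^{-\xi}(s)$, log-partition function $\log n^\xi(\sigma)$, and carrier measure $\mu^\xi$. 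Invariance under $G(\Omega)$ is then read off from the two covariance identities recalled in Section 3.2, namely $g^{-1}\cdot\mu^\xi=\xi(g)\mu^\xi$ and $\Psi^\xi({}^tg^{-1}\sigma)=\xi(g)\Psi^\xi(\sigma)$: together they give $g_*W_{\sigma,\xi}=W_{{}^tg\,\sigma,\xi}$, so the whole family is stable and parametrised equivariantly by $\Omega^*$.

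For part (1), the essential input is the Casalis--Letac--Wesolowski characterisation of natural exponential families on $\R^n$ whose variance function is a homogeneous polynomial of degree two in the mean. I would proceed in three steps. First, extract from the quadratic homogeneous form of $V(m)$ a commutative bilinear product on $\R^n$, and use positive-definiteness of the covariance to derive both the Jordan identity and formal reality; this yields a simple Euclidean Jordan algebra $(\mathscr{A}^+,\circ)$. Second, identify the mean domain with the interior of the cone of squares of $\mathscr{A}^+$: by Koecher--Vinberg this is an irreducible symmetric cone, hence one of the five listed in Proposition~\ref{P:Vclass}. Third, verify that the generating measure of the exponential family, being determined by the variance function up to affine reparametrisation, is relatively invariant under $G(\Omega)$ for an appropriate multiplier $\xi$, so that the given family coincides with a Wishart family $W_{\sigma,\xi}$ in the sense of Section 3.3.

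The main obstacle is the first step of part (1): recovering the Jordan algebra structure on $\R^n$ from the bare datum of a homogeneous quadratic variance function. Rather than rebuild this from scratch, I would cite the Casalis--Letac--Wesolowski theorem to produce the bijection between exponential families with quadratic homogeneous variance and irreducible symmetric cones; once the Jordan algebra is in place, the remainder is mostly bookkeeping — matching the resulting densities to the Wishart normal form of Section 3.3 and noting that the cases $\R,\cC,\hH,\oO$ and split-complex numbers exhaust the classification of Proposition~\ref{P:Vclass}.
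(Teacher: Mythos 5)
Your proposal follows essentially the same route as the paper: part (2) is obtained by assembling the Wishart construction on symmetric cones from the Jordan-algebra/relatively-invariant-measure machinery, and part (1) rests on citing Casalis's characterisation (the paper's reference [Ca1]) of exponential families with homogeneous quadratic variance as Wishart families on symmetric cones. Your write-up is somewhat more explicit than the paper's (which is essentially a citation of Casalis and Letac together with the case-by-case references for $\R,\cC,\hH,\oO$ and the split-complex case), but the key inputs and the logical structure are the same.
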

\begin{proof}

To any symmetric cone one associates one of the 5 types of Jordan Euclidean algebras 
and families of Wishart laws. These are briefly described  by their Laplace transform 
$ \det(1- P(a)(s))^{-p}$  where the determinant and the operator  
$P$ are well defined accordingly to the Jordan algebra. These exponential families of  Wishart have a variance function being homogeneous and quadratic. 

\, 

The first statement is a consequence of the following. By \cite{Let}, the exponential families invariant under the group preserving the symmetric cone lead to considering families of Wishart laws. These can be defined for the symmetric cones over complex, real, split-complex, quaternions, octavic numbers.

\, 

The work of Johnsson--Kotz~\cite{JoKo} studies Wishart laws in the complex case; \cite{A,P} considers Wishart laws in the quaternionic case;.
\cite{Ca1}) investigated Wishart laws in the octavic case and finally \cite{Let} studied Wishart laws in the paracomplex case. 

\, 

A reciprocal statement can be stated. By \cite{Ca1}, if an exponential family in $\R^d$ has a variance function being 
quadratic and homogeneous then there exists a symmetric cone (i.e. a 
Jordan euclidean algebra) for which it is a family of Wishart laws.
Therefore, we have given the proof of the statement.
\end{proof}

\subsection{Conclusion}
We have shown that an important object of the theory of Tomita--Takesaki: the CAH cone (named after Connes--Araki--Haagerup), plays a central role for the Wishart laws, highlighting deeper relations between information geometry (exponential families) and quantum geometry. One aspect of this has been shown using completely different methods in \cite{CoMa,CMM}. This result suggests further emerging relations to quantum geometry to explore.


\begin{thebibliography}{99}


   \bibitem{A} Andersson, S. A. {\sl Invariant normal models.} Ann. Statist. 3, 132 (1975)

   \bibitem{Ca1} Casalis, M. {\sl Les families exponentielles \`a variance quadratique homog\`ene sont de lois de Wishart sur un cone symétrique}. C.R. Acad. Sci. Paris Ser. I Math. 312, 537–540. (1991)
 
 \bibitem{Ca2} Casalis, M., and Letac, G. {\sl The Lukacs-Olkin-Rubin characterization of the Wishart distribution on symmetric cones.} Ann. Stat. 24, 763–786. (1996).

\bibitem{Ch64} Chentsov, N. N., {\sl Geometry of the manifold of probability distributions},
{Dokl. Akad. Nauk SSSR} {\bf 158:3} (1964) 543--546.

\bibitem{Ch65} Chentsov, N. N., {\sl Categories of mathematical statistics,} Dokl. Akad. Nauk SSSR {\bf 164:3} (1965) 511--514.

\bibitem{C23}  Combe, N., {\sl On Frobenius structures in symmetric cones} arXiv:2309.04334 

\bibitem{C24} Combe, N., {\sl Landau-Ginzburg models, Monge-Ampere domains and (pre-)Frobenius manifolds}  arXiv:2409.00835 

\bibitem{CCN1}
Combe, N.,Combe, P., Nencka H., 
{\sl Statistical manifolds and geometric invariants.} {\em Geometric Science of Information} pp. 565--573, Editors: F. Nielsen, F. Barbaresco (2020).

\bibitem{CCN2}
Combe, N.,Combe, P., Nencka H., 
{\sl Algebraic properties of the information geometry's fourth Frobenius manifold.}
in {\em Lecture Notes in Networks and Systems} Springer series, pp.356-370 (2022).

\bibitem{CoMa}  Combe, N.,  Manin, Yu., {\sl  F--manifolds and geometry of information,} Bull. LMS, 5:2 (2020).


\bibitem{CMM}Combe, N., Manin, Yu. I., Marcolli, M.  {\sl Moufang patterns and geometry of information}, Pure and Applied Maths Quaterly
 Volume 19, Number 1, 149–189, (2023).

\bibitem{CoMaMa22B} Combe, N., Manin, Yu. I., Marcolli, M.  {\sl Geometry of Information: classical and quantum aspects}, Journal of Theoretical Computer Sciences, { 908} (2022).

\bibitem{Connes} Connes, A. 
{\sl Caractérisation des espaces vectoriels ordonnés sous-jacents aux algèbres de von Neumann} Annales de l’institut Fourier, tome 24, no 4 (1974), p. 121-155 

\bibitem{FK} Faraut, J., and Koranyi, A. {\sl Analysis on Symmetric Cones.} Oxford University Press,
New York.(1994). 

\bibitem{Ja} Jacobson, N. {\sl Structure and Representations of Jordan Algebras.} Amer. Math. Soc., Providence, R.I. (1968).

\bibitem{JoKo} Johnson, N. L. Kotz, S. {\sl Distributions in Statistics: Continuous Multivariate Distributions,} Wiley, New York (1972).


\bibitem{Let} Letac, G., {\sl Les familles exponentielles statistiques invariantes par les groupes du Cône et du paraboloïde de révolution}  Journal of Applied Probability, Vol. 31, Studies in Applied Probability, pp. 71-95 (1994).

\bibitem{LetacMas1}Letac, G., and Massam, H.  {\sl Quadratic and inverse regression for the Wishart distribution.} Ann. Stat. 26, 573–595, (1998).

\bibitem{LetacMas2} Letac, G., and Massam, H.  {\sl Representations of the Wishart distributions.} Contemp.
Math. 261, 121–142 (2000).

\bibitem{Man99} Manin, Yu. I. { \it Frobenius Manifolds, Quantum Cohomology, and Moduli Spaces}, AMS Colloquium
Publications, Vol. 47, (1999).


\bibitem{MN} Massam, H., and Neher, E. {\sl On transformations and determinants of Wishart variables on symmetric cones.} J. Theoret. Prob. 10, 867–902. (1997).


\bibitem{V}  Vinberg, E. B.{\sl The structure of the group of automorphisms of a homogeneous convex cone.} Trudy Moskov. Mat. Obsc. 13, 56–83; Trans. Moskow Math. Soc. 13, 63–93 (1965).

\bibitem{W} Wishart, J. {\sl. The generalized product moment distribution in a sample from a normal multivariate population}. Biometrika 20A, 32–52 (1928).

\bibitem{P} Perlman, M. D.  {\sl Group symmetric covariance models, a discussion of Mark J. Servish 'A review of multivariate analysis'.} Statist. Sci. 2, 421 (1987).

 \bibitem{Ta70} Takesaki, M.  {\sl  Tomita's theory of modular Hilbert algebras and its applications}, Lecture notes in Math. {\bf 128}, Springer Verlag, Berlin (1970).


\end{thebibliography}
\end{document}